\numberwithin{equation}{section}
\newtheorem{Th}[subsection]{Theorem}
\newtheorem*{Th*}{Theorem}
\newtheorem{Lemma}[subsection]{Lemma}
\newtheorem{Cor}[subsection]{Corollary}
\theoremstyle{definition}
\newtheorem*{definition*}{Definition}
\newtheorem{Remark}[subsection]{Remark}
\newcommand{\comm}[1]{}
\definecolor{DarkGreen}{rgb}{0,0.5,0.1} 
\newcommand\soutD{\bgroup\markoverwith
{\textcolor{DarkGreen}{\rule[.5ex]{2pt}{1pt}}}\ULon}
\newcommand{\romsmall}[1]{\romannumeral #1}
\newcommand*{\rom}[1]{\expandafter\@slowromancap\romannumeral #1@}
\begin{document}
 
\title[The  automorphism group of del Pezzo surface of degree $2$]{The largest automorphism group of del Pezzo surface of degree $2$ without points}
\author{Anastasia V.~Vikulova}
\address{{\sloppy
\parbox{0.9\textwidth}{
Steklov Mathematical Institute of Russian
Academy of Sciences,
8 Gubkin str., Moscow, 119991, Russia
}\bigskip}}
\email{vikulovaav@gmail.com}
\date{}
\thanks{This work was supported by the Russian Science Foundation under grant no. 23-11-00033, https://rscf.ru/en/project/23-11-00033/}
\maketitle

\begin{abstract}
We construct an example  of a field and a del Pezzo surface of degree~$2$ over this field  without points such that its automorphism group is isomorphic to $\mathrm{PSL}_2(\mathbb{F}_7) \times \mathbb{Z}/2\mathbb{Z},$ which is  the largest possible automorphism group of  del Pezzo surface of degree $2$ over an algebraically closed field of characteristic zero. 

\end{abstract}

\section{Introduction}

It is natural to expect  that for a given field~$\mathbf{F}$ a variety $X$  over~$\mathbf{F},$ which does not have $\mathbf{F}$-points, has much less automorphisms than $X_{\overline{\mathbf{F}}}$ has. Let us show some examples which demonstrate such a behavior.

Recall that a surface over a field $\mathbf{F}$ is called a \textit{ Severi--Brauer surface} if it is isomorphic to $\mathbb{P}^2$ over the algebraic closure of $\mathbf{F}.$  A Severi--Brauer surface without points is called  non-trivial Severi--Brauer surface.

\begin{Th}[{cf.~\cite[Theorem 1.3]{ShramovfgSV}}]\label{th:ShramovfgSV}
Let $G$ be a finite group. If there is a field $\mathbf{F}$ of characteristic zero and a non-trivial Severi--Brauer surface $S$ over $\mathbf{F}$ such that the group $\mathrm{Aut}(S)$ contains a subgroup isomorphic to $G,$ then there is $n \in \mathbb{N}$ divisible only by primes congruent to $1$ modulo $3$ such that $G$ is isomorphic to a subgroup of the group  $\mathbb{Z}/3\mathbb{Z} \times (\mathbb{Z}/n\mathbb{Z} \rtimes \mathbb{Z}/3\mathbb{Z}).$

\end{Th}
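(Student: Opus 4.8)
The plan is to reduce everything to division algebras. A non-trivial Severi--Brauer surface $S/\mathbf{F}$ is $\mathrm{SB}(A)$ for a central division algebra $A$ of degree $3$ over $\mathbf{F}$, with $\mathrm{Aut}(S)\cong A^{\times}/\mathbf{F}^{\times}$, so I would classify the finite subgroups $G$ of $A^{\times}/\mathbf{F}^{\times}$. Two elementary facts will be used repeatedly. (1) Every $\mathbf{F}$-subalgebra of $A$ is $\mathbf{F}$, a cubic subfield, or $A$: a domain contained in $A$ is a division algebra, its centre is a subfield of the prime-degree algebra $A$ hence of degree $1$ or $3$, and the double centraliser theorem then leaves only dimensions $1$ and $9$ when the centre is $\mathbf{F}$. (2) If $\bar{x}\in A^{\times}/\mathbf{F}^{\times}$ has prime order $p$, a lift satisfies $x^{p}=c\in\mathbf{F}^{\times}$ and generates a cubic subfield, so $T^{p}-c$ has an irreducible cubic factor; as $T^{p}-c$ is irreducible exactly when $c\notin(\mathbf{F}^{\times})^{p}$, either $p=3$, or $c\in(\mathbf{F}^{\times})^{p}$ and $\mathbf{F}(x)=\mathbf{F}(\zeta_{p})$ has degree $3$, forcing $p\equiv 1\pmod 3$. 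In particular $A^{\times}/\mathbf{F}^{\times}$ has no element of order $2$, so every finite $G\subseteq A^{\times}/\mathbf{F}^{\times}$ has odd order and hence is solvable by the Feit--Thompson theorem.

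Next I would establish the key lemma: for a cubic extension $L/\mathbf{F}$, every finite subgroup of $L^{\times}/\mathbf{F}^{\times}$ is \emph{cyclic} of order $3^{\varepsilon}m$ with $\varepsilon\in\{0,1\}$ and $m$ a product of primes $\equiv 1\pmod 3$. Abelianness is automatic and fact (2) pins down the primes; cyclicity of each $p$-primary part would follow from a short cocycle/Kummer computation over $\mathbf{F}(\zeta_{p^{k}})$ showing that an element of $p$-power order lifts into $\mu_{p^{\infty}}(L)\cdot\mathbf{F}^{\times}$; and the absence of an element of order $9$ (hence of order $3^{k}$ for $k\ge 2$) comes from the norm identity $N_{L/\mathbf{F}}(\sqrt[3]{c})=c$, since an order-$9$ element would force $\sqrt[3]{c}$ to be a cube in $L$ for a non-cube $c$, making $c$ a cube in $\mathbf{F}$.

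For the general case I would take a minimal normal subgroup $N\trianglelefteq G$ (elementary abelian by solvability) and look at the subalgebra $B_{N}$ generated by its lifts. If $B_{N}$ is a cubic field $L$, then $N\subseteq L^{\times}/\mathbf{F}^{\times}$, and: every $g\in G$ normalises $L$ (conjugation is an inner automorphism of $A$ fixing $N$ setwise, hence fixing $B_{N}$); and every $g$ centralising $N$ already lies in $L^{\times}/\mathbf{F}^{\times}$ (if conjugation by $g$ scales a generator of $L$ by a scalar, the induced automorphism of $L/\mathbf{F}$ does too and has order dividing $3$, which forces it to be trivial unless $\zeta_{3}\in\mathbf{F}$; the case $\zeta_{3}\in\mathbf{F}$ is dispatched by a homomorphism $G\to\mu_{3}(\mathbf{F})$ whose kernel sits in $L^{\times}/\mathbf{F}^{\times}$ with $3$-part of order $\le 3$ by the lemma). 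Hence $G_{0}:=G\cap(L^{\times}/\mathbf{F}^{\times})$ has index dividing $3$, and by Skolem--Noether a suitable preimage of a generator of $\mathrm{Aut}_{\mathbf{F}}(L)$ has cube in $\mathbf{F}^{\times}$, so $G=G_{0}$ or $G=G_{0}\rtimes\mathbb{Z}/3\mathbb{Z}$. If instead $B_{N}=A$, then $N$ is non-cyclic elementary abelian with nondegenerate alternating commutator pairing, and degree $3$ forces $N\cong(\mathbb{Z}/3\mathbb{Z})^{2}$ (the Heisenberg subgroup), which is self-centralising in $A^{\times}/\mathbf{F}^{\times}$; so $G/N\hookrightarrow\mathrm{GL}_{2}(\mathbb{F}_{3})$ and oddness of $|G|$ gives $|G/N|\le 3$, while $|G|=27$ is excluded because the cyclic centre of the exponent-$3$ Heisenberg group would generate a cubic subfield $L_{0}$ and produce (again via the $\mu_{3}$-argument) a subgroup of $L_{0}^{\times}/\mathbf{F}^{\times}$ of order $9$, contradicting the lemma.

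Finally, with $G_{0}\cong\mathbb{Z}/3^{\varepsilon}\mathbb{Z}\times\mathbb{Z}/m\mathbb{Z}$ and the acting $\mathbb{Z}/3\mathbb{Z}$ necessarily trivial on the $3$-part (since $|\mathrm{Aut}(\mathbb{Z}/3^{\varepsilon}\mathbb{Z})|$ is prime to $3$), one gets $G\hookrightarrow\mathbb{Z}/3^{\varepsilon}\mathbb{Z}\times(\mathbb{Z}/m\mathbb{Z}\rtimes\mathbb{Z}/3\mathbb{Z})\hookrightarrow\mathbb{Z}/3\mathbb{Z}\times(\mathbb{Z}/n\mathbb{Z}\rtimes\mathbb{Z}/3\mathbb{Z})$ with $n=m$, as desired. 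I expect the main difficulty to be precisely the case $B_{N}=A$: identifying $N$ with the Heisenberg $(\mathbb{Z}/3\mathbb{Z})^{2}$ and, above all, ruling out the exponent-$3$ Heisenberg group of order $27$ (and thereby all ``exotic'' finite subgroups of $\mathrm{PGL}_{3}$) inside $A^{\times}/\mathbf{F}^{\times}$ when $A$ is a division algebra.
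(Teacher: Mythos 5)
The paper does not prove this statement at all --- it is quoted from Shramov's article cited as [ShramovfgSV] --- so the comparison is with that proof; your reduction to finite subgroups of $A^\times/\mathbf{F}^\times$ for a degree-$3$ division algebra $A$, the analysis of prime orders via cubic subfields and roots of unity, and the separate treatment of the Heisenberg-type $(\mathbb{Z}/3\mathbb{Z})^2$ is essentially that route. There is, however, one genuine gap: the $3$-primary part of your key lemma. Your proposed mechanism, ``an element of $p$-power order lifts into $\mu_{p^\infty}(L)\cdot\mathbf{F}^\times$,'' is correct for $p\neq 3$ (there $N_{L/\mathbf{F}}$ gives $c^3\in(\mathbf{F}^\times)^{p^k}$, hence $c\in(\mathbf{F}^\times)^{p^k}$ since $3$ is invertible modulo $p$), but it fails precisely at $p=3$: the class of $2^{1/3}$ in $\mathbb{Q}(2^{1/3})^\times/\mathbb{Q}^\times$ has order $3$ and is not represented by a root of unity. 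You do exclude order $9$ by the norm identity, but exponent $3$ does not imply cyclic: you never rule out $(\mathbb{Z}/3\mathbb{Z})^2\subset L^\times/\mathbf{F}^\times$, and this is load-bearing twice --- it is what makes $G_0$ cyclic, and your exclusion of $|G|=27$ appeals to it (the forbidden ``subgroup of order $9$'' there would be $(\mathbb{Z}/3\mathbb{Z})^2$, not $\mathbb{Z}/9\mathbb{Z}$). If it were possible, $G$ could a priori be $((\mathbb{Z}/3\mathbb{Z})^2\times\mathbb{Z}/m\mathbb{Z})\rtimes\mathbb{Z}/3\mathbb{Z}$, whose Sylow $3$-subgroup of order $27$ does not embed into $\mathbb{Z}/3\mathbb{Z}\times(\mathbb{Z}/n\mathbb{Z}\rtimes\mathbb{Z}/3\mathbb{Z})$. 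The statement is true and the fix is short: if $\bar{x},\bar{y}$ have order $3$ with $x^3=a$, $y^3=b$ in $\mathbf{F}^\times$, then $L=\mathbf{F}(x)=\mathbf{F}(y)$; over $\mathbf{F}'=\mathbf{F}(\zeta_3)$ Kummer theory gives $b\equiv a^i$ modulo $(\mathbf{F}'^\times)^3$, and applying $N_{\mathbf{F}'/\mathbf{F}}$ shows $\mathbf{F}^\times\cap(\mathbf{F}'^\times)^3=(\mathbf{F}^\times)^3$, whence $y\in\mathbf{F}^\times x^i$ because $\mu_3(L)=\mu_3(\mathbf{F})$; so the $3$-torsion of $L^\times/\mathbf{F}^\times$ is cyclic of order $3$.

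Two smaller points. First, your claim that every $g$ centralising $N$ lies in $L^\times/\mathbf{F}^\times$ is indeed false when $\zeta_3\in\mathbf{F}$ (in a cyclic algebra $L\oplus Ly\oplus Ly^2$ with $yxy^{-1}=\zeta_3x$ the class of $y$ centralises the class of $x$ in $A^\times/\mathbf{F}^\times$ without lying in $L^\times/\mathbf{F}^\times$); you flag this, but the cleaner and fully general statement is that restriction of conjugation gives a homomorphism $G\to\mathrm{Aut}(L/\mathbf{F})$, of image of order at most $3$, whose kernel is exactly $G\cap(L^\times/\mathbf{F}^\times)$ because $C_A(L)=L$ --- this yields $[G:G_0]\mid 3$ at once, including for $N$ of order $p\equiv 1\pmod 3$ where $G$ need not centralise $N$ and your $\mu_3$-homomorphism is not available. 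Second, the Skolem--Noether element inducing a generator of $\mathrm{Gal}(L/\mathbf{F})$ need not belong to $G$, so it does not by itself split $G$ over $G_0$; instead argue group-theoretically: the Hall $3'$-part of $G_0$ is normal in $G$, Schur--Zassenhaus gives $G=\mathbb{Z}/m\mathbb{Z}\rtimes P$ with $P$ a $3$-group of order at most $9$ and of exponent $3$ (no elements of order $9$), and since $G_0$ is abelian the action of $P$ on the $m$-part factors through $G/G_0\simeq\mathbb{Z}/3\mathbb{Z}$, which yields $G\simeq\mathbb{Z}/3^{\varepsilon}\mathbb{Z}\times(\mathbb{Z}/m\mathbb{Z}\rtimes\mathbb{Z}/3\mathbb{Z})$ as you want. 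With these repairs your argument goes through and matches the cited proof in structure.
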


Therefore,  Theorem~\ref{th:ShramovfgSV} shows that  the set of finite subgroups in the automorphism group of a non-trivial Severi--Brauer surface over a field $\mathbf{F}$ of characteristic zero is \textit{strictly} contained  in the set of finite subgroups of the  automorphism group of $\mathbb{P}^2$ over the algebraic closure of $\mathbf{F}$.

Over the field of rational numbers $\mathbb{Q}$ we can say more about finite abelian subgroups of the automorphism group of a non-trivial Severi--Brauer surface.

\begin{Th}[{\cite[Theorem 1.3]{VikulovaSB}}]
Let $V$ be a non-trivial Severi--Brauer surface over the field $\mathbb{Q}$ and let $G$ be a finite group. Then  $G$ is isomorphic to a subgroup of~$\mathrm{Aut}(V)$ if and only if~\mbox{$G \subset \mathbb{Z}/3\mathbb{Z}.$}
\end{Th}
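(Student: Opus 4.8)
The strategy is to translate the statement into a question about finite subgroups of the group $A^{\times}/\mathbb{Q}^{\times}$, where $A$ is the central simple $\mathbb{Q}$-algebra of degree $3$ with $V\cong\mathrm{SB}(A)$; the hypothesis that $V$ has no $\mathbb{Q}$-point means exactly that $A$ is a division algebra, so $\dim_{\mathbb{Q}}A=9$, and $\mathrm{Aut}(V)\cong\mathrm{PGL}_{1}(A)$, whose group of $\mathbb{Q}$-points is $A^{\times}/\mathbb{Q}^{\times}$. The reverse implication is easy: every central division algebra of degree $3$ is cyclic (Wedderburn), so $A\cong(K/\mathbb{Q},\sigma,a)$ with $K/\mathbb{Q}$ cyclic of degree $3$ and $a\in\mathbb{Q}^{\times}$, and the distinguished element $e\in A^{\times}$ with $e^{3}=a$, $e\notin\mathbb{Q}^{\times}$, has image of order $3$ in $A^{\times}/\mathbb{Q}^{\times}$. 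Thus $\mathbb{Z}/3\mathbb{Z}$ does embed into $\mathrm{Aut}(V)$.

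For the forward implication, fix a finite subgroup $G\subseteq A^{\times}/\mathbb{Q}^{\times}$. I would first record that every subfield of $A$ has degree $1$ or $3$ over $\mathbb{Q}$ (its degree divides $\deg A=3$), so for $x\in A^{\times}\setminus\mathbb{Q}^{\times}$ the commutative domain $\mathbb{Q}[x]$ is a cubic field. Next I claim that an element of prime order $p$ in $A^{\times}/\mathbb{Q}^{\times}$ can only have order $3$: such an element is represented by some $x$ with $q:=x^{p}\in\mathbb{Q}^{\times}$ and $x\notin\mathbb{Q}^{\times}$, so the (irreducible, cubic) minimal polynomial of $x$ over $\mathbb{Q}$ divides $X^{p}-q$; but the irreducible factors of $X^{p}-q$ over $\mathbb{Q}$ have degree $p$ (if $q\notin(\mathbb{Q}^{\times})^{p}$) or degrees $1$ and $p-1$ (if $q\in(\mathbb{Q}^{\times})^{p}$), so the existence of a cubic factor forces $p\in\{3,4\}$, hence $p=3$. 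Therefore $G$ is a $3$-group, and applying Theorem~\ref{th:ShramovfgSV} to the non-trivial Severi--Brauer surface $V$, the group $G$ embeds into $\mathbb{Z}/3\mathbb{Z}\times(\mathbb{Z}/n\mathbb{Z}\rtimes\mathbb{Z}/3\mathbb{Z})$ for some $n$ not divisible by $3$; being a $3$-group, it then embeds into a Sylow $3$-subgroup of the latter, which is $(\mathbb{Z}/3\mathbb{Z})^{2}$.

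It remains to show that $(\mathbb{Z}/3\mathbb{Z})^{2}$ does not embed into $A^{\times}/\mathbb{Q}^{\times}$; this is the crux. Suppose $\bar{x},\bar{y}$ generate such a subgroup, with $x^{3}=a\in\mathbb{Q}^{\times}$, $y^{3}=b\in\mathbb{Q}^{\times}$, and commutator $xyx^{-1}y^{-1}=c\in\mathbb{Q}^{\times}$. Conjugating the central element $b=y^{3}$ by $x$ gives $b=(xyx^{-1})^{3}=(cy)^{3}=c^{3}b$, so $c^{3}=1$ in $\mathbb{Q}^{\times}$, i.e. $c=1$: the elements $x$ and $y$ actually commute. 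Hence $\mathbb{Q}[x,y]$ is a finite-dimensional commutative domain, i.e. a subfield of $A$, and it contains the cubic (therefore maximal) subfield $\mathbb{Q}[x]=\mathbb{Q}(\sqrt[3]{a})$, so $\mathbb{Q}[x,y]=\mathbb{Q}(\sqrt[3]{a})=:K$ and in particular $\mathbb{Q}(\sqrt[3]{a})=\mathbb{Q}(\sqrt[3]{b})$. The standard description of equalities of pure cubic fields then gives $b\equiv a^{e}\pmod{(\mathbb{Q}^{\times})^{3}}$ with $e\in\{1,2\}$; since $\zeta_{3}\notin K$, the field $K$ contains exactly one cube root of $b$, which is therefore a $\mathbb{Q}^{\times}$-multiple of $(\sqrt[3]{a})^{e}$, forcing $\bar{y}=\bar{x}^{e}\in\langle\bar{x}\rangle$ --- a contradiction. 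Hence $|G|\le 3$, i.e. $G\subseteq\mathbb{Z}/3\mathbb{Z}$.

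The main obstacle is this last step --- understanding commuting pairs of order-$3$ elements of $A^{\times}/\mathbb{Q}^{\times}$; the point that unlocks it is that the commutator lies in $\mathbb{Q}^{\times}$ and must be a cube root of unity, which collapses the hypothetical $(\mathbb{Z}/3\mathbb{Z})^{2}$ into a single pure cubic subfield, where an explicit computation with cube roots applies. One can also bypass Theorem~\ref{th:ShramovfgSV}: a finite $3$-group with no subgroup isomorphic to $(\mathbb{Z}/3\mathbb{Z})^{2}$ is cyclic, and an element of order $9$ would, by the same argument, lie in a pure cubic subfield $K$ whose group $K^{\times}/\mathbb{Q}^{\times}$ has $3$-torsion exactly $\mathbb{Z}/3\mathbb{Z}$, which rules out $\mathbb{Z}/9\mathbb{Z}$.
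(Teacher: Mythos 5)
This statement is not proved in the paper at all: it is imported verbatim from \cite[Theorem 1.3]{VikulovaSB}, so there is no internal proof to compare against, and your argument has to be judged on its own. On that basis it is essentially correct and self-contained (modulo standard facts), and it reconstructs the substance of the cited result rather than merely quoting it. Your reduction $\mathrm{Aut}(V)\cong A^{\times}/\mathbb{Q}^{\times}$ for $V=\mathrm{SB}(A)$ with $A$ a degree-$3$ division algebra is the standard starting point (also used in \cite{ShramovfgSV}); the prime-order analysis via cubic subfields and the factorization of $X^{p}-q$ is correct; and the crux --- that a hypothetical $(\mathbb{Z}/3\mathbb{Z})^{2}$ has commutator $c\in\mathbb{Q}^{\times}$ with $c^{3}=1$, hence $c=1$ because $\zeta_{3}\notin\mathbb{Q}$, which forces the two generators into one pure cubic subfield where $b\equiv a^{e}\bmod(\mathbb{Q}^{\times})^{3}$ and uniqueness of cube roots gives $\bar{y}=\bar{x}^{e}$ --- is exactly the point where rationality of the base field enters (over a field containing $\zeta_{3}$ the commutator could be $\zeta_{3}$ and $(\mathbb{Z}/3\mathbb{Z})^{2}$ does occur). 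Compared with relying on Theorem~\ref{th:ShramovfgSV}, your approach buys a more elementary and almost independent proof: as you note, the appeal to that theorem can be removed, since a $3$-group with no $(\mathbb{Z}/3\mathbb{Z})^{2}$ is cyclic. Two small points should be firmed up if this were written out: the ``standard description'' of when two pure cube roots generate the same cubic field deserves a one-line justification (Kummer theory over $\mathbb{Q}(\zeta_{3})$ plus descent of the cube, or a direct argument), and the closing remark excluding $\mathbb{Z}/9\mathbb{Z}$ needs the observation that $\sqrt[3]{a}$ is not a cube in $\mathbb{Q}(\sqrt[3]{a})$ (e.g.\ by taking norms to $\mathbb{Q}$), though in your main line of argument this is not needed because $(\mathbb{Z}/3\mathbb{Z})^{2}$ already has exponent $3$.
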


A result of the same kind is known for smooth cubic surfaces without points.

\begin{Th}[{\cite[Theorem 1.4(\romsmall{1})]{Shramovcubic}}]
Let $\mathbf{F}$ be a field of characteristic zero and let~$S$ be a smooth cubic surface over $\mathbf{F}.$ Suppose that $S$ has no points over $\mathbf{F}.$ Then the group $\mathrm{Aut}(S)$ is abelian.

\end{Th}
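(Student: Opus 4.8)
The plan is to prove the contrapositive: assuming $\mathrm{Aut}(S)$ is non-abelian, I will produce an $\mathbf{F}$-point of $S$. To set up, note that over $\overline{\mathbf{F}}$ the surface $S$ is a del Pezzo surface of degree $3$, so $\mathrm{Aut}(S_{\overline{\mathbf{F}}})$ is finite and acts faithfully on $\mathrm{Pic}(S_{\overline{\mathbf{F}}})$ preserving the intersection pairing and $K_S$; this identifies $\mathrm{Aut}(S_{\overline{\mathbf{F}}})$, and hence $G := \mathrm{Aut}(S) \subseteq \mathrm{Aut}(S_{\overline{\mathbf{F}}})$, with a subgroup of the Weyl group $W(\mathsf{E}_6)$. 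The actions of $G$ and of $\Gamma := \mathrm{Gal}(\overline{\mathbf{F}}/\mathbf{F})$ on $\mathrm{Pic}(S_{\overline{\mathbf{F}}})$ and on the $27$ lines commute, so $G$ centralizes the finite image $\overline{\Gamma} \subseteq W(\mathsf{E}_6)$ of $\Gamma$. Since the possible groups $\mathrm{Aut}(S_{\overline{\mathbf{F}}})$ are explicitly known, this already restricts the non-abelian $G$ that can occur.

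\emph{The main mechanism.} The anticanonical system embeds $S \hookrightarrow \mathbb{P}^3_{\mathbf{F}} = \mathbb{P}(V)$ with $G$ acting by projective transformations over $\mathbf{F}$; working with a finite central extension $\widehat{G}$ acting linearly on $V$ (where one must be careful, since such a linearization need not exist over $\mathbf{F}$ itself), decompose $V = \bigoplus_i V_i$ into $\widehat{G}$-isotypic $\mathbf{F}$-subrepresentations, so that each $\mathbb{P}(V_i) \subseteq \mathbb{P}^3_{\mathbf{F}}$ is a $G$-stable linear subspace defined over $\mathbf{F}$. If all $V_i$ are one-dimensional then $\widehat{G}$ acts by characters and $G$ is abelian, contrary to assumption; so some $\mathbb{P}(V_i)$ is a $G$-stable point, line, or plane of positive-dimensional isotypic origin. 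Using that the cubic form $F$ defining $S$ is $\widehat{G}$-semi-invariant of a fixed character $\chi$, one extracts an $\mathbf{F}$-point of $S$: a one-dimensional $V_i$ gives a $G$-fixed point $[V_i] \in \mathbb{P}^3(\mathbf{F})$ which lies on $S$ whenever its character $\chi_i$ satisfies $\chi_i^{3} \neq \chi$; a $G$-invariant line $\mathbb{P}(V_i) \cong \mathbb{P}^1_{\mathbf{F}}$ either lies on $S$ — an $\mathbf{F}$-line, hence $\mathbf{F}$-points — or meets $S$ in an effective divisor of degree $3$ whose combined $G$- and $\Gamma$-orbit structure is rigid (for instance, if the image of $G$ in $\mathrm{PGL}_2(\mathbf{F})$ is an $S_3$ acting on a size-three orbit, then $\Gamma$, commuting with this $S_3$, fixes that orbit pointwise); a $G$-invariant plane $\mathbb{P}(V_i) \cong \mathbb{P}^2_{\mathbf{F}}$ meets $S$ in a $G$-invariant plane cubic curve, handled via the classical description of its linear automorphisms acting on the nine inflection points. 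Thus every configuration except a short list of ``blocked'' ones — those defeated only by the equality $\chi_i^{3} = \chi$, or by a degree-three closed point occurring as $S \cap \mathbb{P}(V_i)$ — yields the desired point.

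\emph{The main obstacle.} What remains, and where the real work lies, is the finite elimination of the blocked configurations. One enumerates, using the classification of automorphism groups of smooth cubic surfaces and explicit computation inside $W(\mathsf{E}_6)$, the non-abelian groups $G$ that can centralize a nontrivial $\overline{\Gamma}$, and treats them case by case. When $\overline{\Gamma}$ preserves a set of six pairwise disjoint lines, $S$ is the blow-up of a Severi--Brauer surface $V$ at a $\Gamma$-stable set of six points and $G$ embeds into the stabilizer of that set in $\mathrm{Aut}(V)$; here Theorem~\ref{th:ShramovfgSV} pins down $\mathrm{Aut}(V)$, and together with the constraint of fixing six points this forces $G$ to be abelian (or one reads off an $\mathbf{F}$-point from a point of $\mathbb{P}^2$ away from the blown-up locus when $V$ is trivial). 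In the remaining, $\mathbf{F}$-minimal cases — where $S$ has no $\mathbf{F}$-line, so $\mathrm{Pic}(S)$ has rank one, generated by $K_S$, or $S$ carries an $\mathbf{F}$-rational conic bundle structure over $\mathbb{P}^1_{\mathbf{F}}$ — one shows on the lattice $\mathrm{Pic}(S_{\overline{\mathbf{F}}})$ and the incidence graph of the $27$ lines that no non-abelian subgroup of $W(\mathsf{E}_6)$ can centralize an $\overline{\Gamma}$ compatible with both minimality and pointlessness. I expect this case-by-case step, rather than the representation-theoretic one, to be the principal difficulty; the role of the latter is exactly to keep the list of cases short.
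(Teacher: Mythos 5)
This statement is not proved in the paper at all: it is quoted verbatim from the cited source (Theorem~1.4(i) of [Shramovcubic]) as background in the introduction, so there is no proof here to compare yours against; your proposal has to stand on its own as a proof of Shramov's theorem, and it does not. What you have written is a program, not a proof. The entire content of the theorem is concentrated in what you call the ``main obstacle'': the elimination, case by case, of the non-abelian subgroups of $W(\mathsf{E}_6)$ compatible with a pointless Galois action, and of the ``blocked'' configurations ($\chi_i^3=\chi$, irreducible degree-$3$ intersection points, the invariant plane-cubic case). You explicitly defer this step (``I expect this case-by-case step\dots to be the principal difficulty''), so no theorem has been proved; the representation-theoretic reduction you do carry out is the easy and standard part.

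Beyond the overall incompleteness, several individual steps would not survive scrutiny as stated. The linearization issue over $\mathbf{F}$ is flagged but never resolved: the obstruction to lifting $G\subset\mathrm{PGL}(V)(\mathbf{F})$ to a finite group in $\mathrm{GL}(V)$ lives in $H^2(G,\mathbf{F}^*)$ and need not vanish over a non-closed field, and replacing $\widehat G$ by the full preimage (an extension by $\mathbf{F}^*$) changes the meaning of ``isotypic over $\mathbf{F}$'' (Schur's lemma now produces division algebras, so isotypic pieces need not behave as you assume). The ``rigidity'' of the degree-$3$ scheme $S\cap\mathbb{P}(V_i)$ is argued only in the one sub-case where $G$ maps onto an $S_3$ acting faithfully on three distinct points; if $V_i$ is isotypic for a single character of multiplicity two, then $G$ fixes the line $\mathbb{P}(V_i)$ pointwise, gives no constraint at all, and $S\cap\mathbb{P}(V_i)$ can perfectly well be an irreducible closed point of degree $3$ --- precisely a blocked case, and exactly what one expects on a pointless surface, where every closed point has degree divisible by $3$. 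Likewise the invariant-plane case is waved at via ``the nine inflection points,'' but those inflection points are in general not rational and their Galois orbits give no $\mathbf{F}$-point; nothing in your sketch converts a $G$-invariant plane cubic over a pointless base into a rational point or a contradiction. Until the enumeration inside $W(\mathsf{E}_6)$ and the disposal of all blocked configurations are actually carried out, the argument establishes nothing; for a complete treatment one must consult the cited paper of Shramov, whose proof is a substantial analysis in its own right rather than a short reduction of the kind sketched here.
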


However, if we consider del Pezzo surfaces of degree $2,$ the situation will change dramatically.  Recall that the  largest automorphism group of del Pezzo surfaces of degree $2$ over an algebraically closed field of characteristic zero is~$\mathrm{PSL}_2(\mathbb{F}_7) \times \mathbb{Z}/2\mathbb{Z}$  (see, for instance,~\cite[\S~8.7.3]{DolgClass}). The goal of this paper is to prove the following theorem.

\begin{Th}\label{th:nopoints}
Let $S$ be a surface defined by the equation
$$
w^2+(x^4+y^4+z^4)-\frac{3}{2}(1-\sqrt{-7})(x^2y^2+x^2z^2+y^2z^2)=0
$$
\noindent over $\mathbb{Q}(\sqrt{-7})$ in weighted projective space $\mathbb{P}(2,1,1,1)$ with  homogeneous coordinates~$w,x,y,z$ such that $\deg(w)=2$ and $\deg(x)=\deg(y)=\deg(z)=1.$ Then the following holds:
\begin{enumerate}
\renewcommand\labelenumi{\rm (\roman{enumi})}
\renewcommand\theenumi{\rm (\roman{enumi})}

\item\label{conditiondp2}
$S$ is a smooth del Pezzo surface of degree $2;$

\item\label{conditionAut}
one has $\mathrm{Aut}(S)\simeq \mathrm{PSL}_2(\mathbb{F}_7) \times \mathbb{Z}/2\mathbb{Z};$

\item\label{conditionPic}
$\mathrm{Pic}(S) \simeq \mathbb{Z};$

\item\label{conditionnopoint}
$S$ does not have rational points.


\end{enumerate}

\end{Th}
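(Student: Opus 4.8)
The plan is to verify the four claims in order, with (ii) being the heart of the matter.

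For (i), I would write down the hypersurface in $\mathbb{P}(2,1,1,1)$ and check smoothness by a direct Jacobian computation: a del Pezzo surface of degree $2$ is exactly a smooth double cover of $\mathbb{P}^2$ branched over a smooth plane quartic, so I would identify the branch quartic as $C\colon x^4+y^4+z^4-\tfrac32(1-\sqrt{-7})(x^2y^2+x^2z^2+y^2z^2)=0$ and check that $C$ is smooth over $\mathbb{Q}(\sqrt{-7})$ (equivalently, that its discriminant does not vanish, or that no common zero of the partials exists — here one must be a little careful because the coefficient $\tfrac32(1-\sqrt{-7})$ is special and one should confirm it avoids the finitely many bad values). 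Then $-K_S$ is the pullback of a line, so $K_S^2 = 2$, giving (i).

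For (ii), the key point is that this quartic is a twist of the Klein quartic. Over $\overline{\mathbb{Q}}$ the surface $S$ becomes the unique del Pezzo surface of degree $2$ with automorphism group $\mathrm{PSL}_2(\mathbb{F}_7)\times\mathbb{Z}/2\mathbb{Z}$, the extra $\mathbb{Z}/2\mathbb{Z}$ being the Geiser involution $w\mapsto -w$, which is always defined over the base field. So I must show the $\mathrm{PSL}_2(\mathbb{F}_7)$-action on the Klein quartic descends to $\mathbb{Q}(\sqrt{-7})$ for this particular model. The Klein quartic $x^3y+y^3z+z^3x$ has its $\mathrm{PSL}_2(\mathbb{F}_7)$-action defined over $\mathbb{Q}(\zeta_7)$ but not over a smaller field; however, the symmetric model with the $S_4$-symmetry made visible (the one appearing in the equation above) is known to realize the full automorphism group over $\mathbb{Q}(\sqrt{-7})$ — this is essentially why $\sqrt{-7}$ appears. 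Concretely I would exhibit generators: the obvious $S_4$ permuting and sign-changing $x,y,z$ is visibly defined over $\mathbb{Q}$, and then one more order-$7$ (or order-$3$ cyclic) matrix with entries in $\mathbb{Q}(\sqrt{-7})$ that together with $S_4$ generates $\mathrm{PSL}_2(\mathbb{F}_7)$ and preserves the quartic. Having produced a subgroup $\mathrm{PSL}_2(\mathbb{F}_7)\times\mathbb{Z}/2\mathbb{Z}\hookrightarrow\mathrm{Aut}(S)$, equality follows because $\mathrm{Aut}(S)\subseteq\mathrm{Aut}(S_{\overline{\mathbb{Q}}})\cong\mathrm{PSL}_2(\mathbb{F}_7)\times\mathbb{Z}/2\mathbb{Z}$ by the classification over an algebraically closed field. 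The main obstacle here is the explicit construction and verification of the order-$7$ matrix over $\mathbb{Q}(\sqrt{-7})$ preserving this exact quartic form — this is the one genuinely computational heart of the argument.

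For (iii), since $\mathrm{Pic}(S_{\overline{\mathbb{Q}}})\cong\mathbb{Z}^8$ carries a $\mathrm{Gal}(\overline{\mathbb{Q}}/\mathbb{Q}(\sqrt{-7}))$-action and $\mathrm{Pic}(S)=\mathrm{Pic}(S_{\overline{\mathbb{Q}}})^{\mathrm{Gal}}$ (the Brauer obstruction vanishes for del Pezzo surfaces of degree $2$ with $S(\mathbf F)=\varnothing$? — no, I should instead argue directly), I would show the Galois image acts on the $56$ exceptional curves (equivalently, on the root system $E_7$) with no nonzero invariants beyond multiples of $K_S$. In practice the Galois action factors through a subgroup of the Weyl group $W(E_7)$ that is also a subgroup of the geometric automorphism group's action on $\mathrm{Pic}$; choosing the Galois action to realize, say, an element whose only fixed lattice vectors are multiples of $K_S$ forces $\mathrm{Pic}(S)\cong\mathbb{Z}\langle K_S\rangle$. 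I would pin down the Galois action by examining how $\mathrm{Gal}(\mathbb{Q}(\sqrt{-7})$-algebraic closure$)$ permutes the Weierstrass points / bitangents of $C$ using the model; alternatively, note that $\mathrm{Pic}(S)\cong\mathbb{Z}$ is equivalent to $S$ having Picard rank $1$, which I would get from the fact that the $56$ lines come in Galois orbits of size $>1$ with trivial coinvariant rank, a finite check.

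Finally, for (iv), non-existence of $\mathbb{Q}(\sqrt{-7})$-points: here I would use a local obstruction. A del Pezzo surface of degree $2$ of the form $w^2=F_4(x,y,z)$ has a $\mathbf F$-point iff $F_4$ represents a square over $\mathbf F$ (projectively), so I would find a place $v$ of $\mathbb{Q}(\sqrt{-7})$ — most likely a prime above $2$ or above $7$, or the restriction of the archimedean place coming from an embedding $\mathbb{Q}(\sqrt{-7})\hookrightarrow\mathbb{C}$ (but that is complex, so one cannot use reality) — so a finite place $v$ — at which $F_4(x,y,z)$ is a non-square times a unit for all $(x,y,z)$, i.e. the quartic surface has no $\mathbb{Q}(\sqrt{-7})_v$-point. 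Concretely, reducing $F_4$ modulo a suitable prime and analyzing the quadratic form / quartic mod $v$ and its Hensel lifts should show $w^2=F_4$ is unsolvable $v$-adically; the coefficient $\tfrac32(1-\sqrt{-7})$ was presumably chosen precisely so that such a place exists. This step is routine once the right place is identified; identifying it is the only subtlety, and I expect the prime $2$ (which ramifies interestingly in $\mathbb{Q}(\sqrt{-7})$, since $-7\equiv 1\pmod 8$ means $2$ splits — so perhaps $7$ instead, which ramifies) to be the relevant one.
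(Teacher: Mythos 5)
Your overall architecture coincides with the paper's (anticanonical double cover of a twist of the Klein quartic, automorphisms from the branch curve together with the Geiser involution, Galois action in $W(\mathrm{E}_7)$ for the Picard group, a finite-place obstruction for points), but the three steps that actually carry the theorem are left as plans, and one of them points the wrong way. For (ii), you correctly isolate the crux --- that the full $\mathrm{PSL}_2(\mathbb{F}_7)$-action on this particular quartic must be defined over $\mathbb{Q}(\sqrt{-7})$ --- but you neither exhibit the generators nor invoke a reference; the paper settles this by citing Elkies, where exactly the model $x^4+y^4+z^4-\tfrac32(1-\sqrt{-7})(x^2y^2+x^2z^2+y^2z^2)$ is shown to carry the full automorphism group over $\mathbb{Q}(\sqrt{-7})$. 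Without that input neither your lower bound nor your upper bound (which presumes the curve is geometrically the Klein quartic) is established. For (iii), your statement that the Galois image in $W(\mathrm{E}_7)$ is ``a subgroup of the geometric automorphism group's action on $\mathrm{Pic}$'' is not justified; the correct and crucial fact is that it lies in the \emph{centralizer} of the image of $\mathrm{Aut}(S)$, and the paper proves (using $W(\mathrm{E}_7)\simeq\mathbb{Z}/2\mathbb{Z}\times\mathrm{Sp}_6(\mathbb{F}_2)$ and the conjugacy class of an order-$7$ element) that the centralizer of $\mathrm{PSL}_2(\mathbb{F}_7)$ there is $\mathbb{Z}/2\mathbb{Z}$, generated by the Geiser class. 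This is what collapses your ``finite check'' to a single verification: the Galois image is trivial or $\langle\gamma\rangle$, and one must still rule out triviality, which the paper does by exhibiting one exceptional curve (a component of the preimage of the bitangent $x+y+z=0$) not defined over $\mathbb{Q}(\sqrt{-7})$. Your sketch contains no such nontriviality argument, and without the centralizer lemma the determination of the invariant lattice is not a routine check.

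For (iv), the strategy (a local obstruction at a finite place) is the right one, but your final guess lands on the ramified place above $7$, whereas the obstruction actually used, and the reason the coefficient was chosen as it is, lives at a place above $2$: since $-7\equiv 1\pmod 8$, the prime $2$ splits, the completion is $\mathbb{Q}_2$, and under a suitable isomorphism $\sqrt{-7}$ goes to a $2$-adic unit congruent to $181\bmod 128$, so the coefficient $-\tfrac32(1-\sqrt{-7})$ becomes $\equiv 14\pmod{64}$; an elementary case analysis mod $64$ then shows $w^2+x^4+y^4+z^4+14(x^2y^2+x^2z^2+y^2z^2)=0$ has no primitive $\mathbb{Z}_2$-solution. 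The splitness you regarded as a reason to abandon the prime $2$ is precisely what makes the argument work (the completion is $\mathbb{Q}_2$ itself and the coefficient acquires an explicit $2$-adic value), and your remarks do not establish any obstruction above $7$ --- note that modulo the prime above $7$ the equation reduces to $w^2+(x^2+y^2+z^2)^2=0$, which does have nonzero solutions in the residue field, so a putative $7$-adic obstruction would at best require a delicate analysis you have not begun. As it stands, parts (ii), (iii) and (iv) all have genuine gaps.
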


\textbf{Acknowledgment.} The author would like to thank  C.~A.~Shramov for proposing interesting problems and for constant support.

\section{Preliminaries}
In this section we give some auxiliary lemmas which we need in  the proof of the main theorem. Here and below by $\mathbb{Z}_2$ and by $\mathbb{Q}_2$ we denote $2$-adic integers and rational numbers, respectively.

\begin{Lemma}[{\cite[Chapter \rom{2}, Corollary of Theorem 4]{Serrearithmetic}}]\label{lemma:Serremod2}
For an element $x=2^nu$ of~$\mathbb{Q}_2^*,$ where $u \in \mathbb{Z}_2^*,$ to be a square it is necessary and sufficient that $n$ is even and~\mbox{$u \equiv 1 \mod 8.$}
\end{Lemma}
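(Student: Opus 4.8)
The plan is to separate the valuation condition from the unit condition, and then to treat the unit case by a $2$-adic approximation argument. Since the $2$-adic valuation $v_2 \colon \mathbb{Q}_2^* \to \mathbb{Z}$ is a group homomorphism, every square has even valuation; hence if $x = 2^n u$ is a square then $n = v_2(x)$ is necessarily even. Conversely, writing $n = 2m$, the factor $2^{2m} = (2^m)^2$ is already a square, so $x$ is a square if and only if the unit $u$ is a square in $\mathbb{Z}_2^*$. Thus I would first reduce the whole statement to the single claim that a unit $u \in \mathbb{Z}_2^*$ is a square if and only if $u \equiv 1 \pmod 8$, and organize the remaining argument around this reduction.

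For necessity I would argue directly modulo $8$. If $u = v^2$ with $v \in \mathbb{Z}_2^*$, then $v$ is a $2$-adic unit, hence odd, so its residue modulo $8$ lies in $\{1,3,5,7\}$; squaring each of these yields $1$ modulo $8$. Therefore $u \equiv 1 \pmod 8$, which together with the valuation remark gives the ``only if'' direction of the full statement.

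For sufficiency, assume $u \equiv 1 \pmod 8$ and seek a root of $f(T) = T^2 - u$ in $\mathbb{Z}_2$. The natural idea is Hensel's lemma at the approximate root $a = 1$, and here lies the main obstacle: the residue characteristic is $2$, and $f'(T) = 2T$ already vanishes modulo $2$, so the elementary form of Hensel's lemma, which requires $f'(a) \not\equiv 0$, does not apply. I would instead invoke the refined form of Hensel's lemma, valid whenever $|f(a)|_2 < |f'(a)|_2^2$. With $a = 1$ one has $f(1) = 1 - u$, and $u \equiv 1 \pmod 8$ forces $v_2(1 - u) \ge 3$, i.e. $|f(1)|_2 \le 2^{-3}$; meanwhile $f'(1) = 2$, so $|f'(1)|_2^2 = 2^{-2}$. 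Since $2^{-3} < 2^{-2}$, the refined Hensel's lemma produces $v \in \mathbb{Z}_2$ with $v^2 = u$. Hence $u$ is a square, and with $n = 2m$ even we conclude $x = (2^m v)^2$, completing the ``if'' direction.

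Alternatively, I could replace the appeal to refined Hensel by an explicit successive-approximation scheme, constructing $v_k \in \mathbb{Z}_2$ with $v_k^2 \equiv u \pmod{2^{k+3}}$ and $v_{k+1} \equiv v_k$, and checking that each lifting step is solvable precisely because the correction is controlled by the factor $2$ coming from the derivative; the sequence $(v_k)$ is then $2$-adically Cauchy and its limit is the desired square root. Either way the crux is identical: the gap between residue characteristic $2$ and the vanishing of the derivative forces the quantitative bound $v_2(1 - u) \ge 3$, rather than the weaker $v_2(1 - u) \ge 1$ that would suffice at odd primes, and this is exactly why the congruence is modulo $8$ and not modulo $2$.
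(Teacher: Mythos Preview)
Your argument is correct. The paper itself does not prove this lemma at all: it is stated as a citation from Serre's \emph{A Course in Arithmetic} and used as a black box. Your proof---reducing to units via the valuation homomorphism, checking necessity by squaring the odd residues modulo~$8$, and obtaining sufficiency from the refined Hensel inequality $|f(1)|_2 \le 2^{-3} < 2^{-2} = |f'(1)|_2^{2}$---is precisely the standard argument one finds in that reference, so there is nothing to compare.
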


\begin{Lemma}\label{lemma:sqrt-7}
For the field  $\mathbb{Q}_2$ we have $\sqrt{-7} \in \mathbb{Q}_2$ and the following congruence relation holds:
$$
\pm \sqrt{-7} \equiv \pm 181 \mod 128.
$$
\end{Lemma}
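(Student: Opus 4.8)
The plan is to first establish existence of the square root via the criterion already recorded as Lemma~\ref{lemma:Serremod2}, and then pin down its residue modulo $128$ by a short $2$-adic valuation computation. For the first part, write $-7 = 2^0 \cdot (-7)$ with $u = -7 \in \mathbb{Z}_2^*$; since $-7 \equiv 1 \bmod 8$, Lemma~\ref{lemma:Serremod2} gives $\sqrt{-7} \in \mathbb{Q}_2^*$. Moreover any square root $a$ of $-7$ in $\mathbb{Q}_2$ satisfies $v_2(a^2) = v_2(-7) = 0$, so $a \in \mathbb{Z}_2^*$, i.e.\ $a$ is a $2$-adic unit; in particular $a$ is odd in the sense that $v_2(a)=0$.

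The key numerical observation is the identity $181^2 = 32761 = 2^{15} - 7$, equivalently $181^2 + 7 = 2^{15}$. Fix a square root $a \in \mathbb{Z}_2^*$ of $-7$. Then
$$
(a - 181)(a + 181) = a^2 - 181^2 = -7 - (2^{15} - 7) = -2^{15},
$$
so $v_2(a-181) + v_2(a+181) = 15$. Both factors $a - 181$ and $a + 181$ are sums/differences of two odd $2$-adic units, hence lie in $2\mathbb{Z}_2$; and their sum $(a-181)+(a+181) = 2a$ has $v_2$ equal to $1$, so the two valuations cannot both exceed $1$. Therefore exactly one of them equals $1$ and the other equals $14$. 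In the first case $v_2(a + 181) = 14 \geq 7$, so $a \equiv -181 \bmod 2^7$; in the second case $v_2(a - 181) = 14 \geq 7$, so $a \equiv 181 \bmod 2^7$. Since the two square roots of $-7$ are $a$ and $-a$, we conclude $\pm\sqrt{-7} \equiv \pm 181 \bmod 128$, as claimed.

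I do not expect any genuine obstacle here: the statement is elementary once one has the identity $181^2 + 7 = 2^{15}$, and the only thing requiring a little care is the bookkeeping of $2$-adic valuations of $a \pm 181$ to see that one of them is divisible by $2^7$ (indeed by $2^{14}$). One could alternatively invoke the strong form of Hensel's lemma directly, since $v_2(181^2+7) = 15 > 2 = 2\,v_2(2\cdot 181)$ guarantees a root $a$ of $x^2 + 7$ with $v_2(a - 181) \geq 14$; I would keep the self-contained valuation argument above so as to rely only on Lemma~\ref{lemma:Serremod2}, which is already cited in the paper.
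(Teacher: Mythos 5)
Your proposal is correct, and it rests on the same two ingredients as the paper's proof: existence of $\sqrt{-7}\in\mathbb{Q}_2$ via Lemma~\ref{lemma:Serremod2} (since $-7\equiv 1 \bmod 8$) and the numerical identity $181^2+7=2^{15}$. The difference is one of completeness rather than of route: the paper simply verifies $181^2+7=2^{15}\equiv 0 \bmod 128$ and asserts the congruence for the $2$-adic roots, which strictly speaking only shows that $181$ solves $t^2+7\equiv 0 \bmod 128$ (mod $128$ there are four such residues, namely $\pm 181$ and $\pm 181+64$), whereas your valuation bookkeeping on $(a-181)(a+181)=-2^{15}$ --- or equivalently the strong form of Hensel's lemma --- is exactly what converts ``$181$ is an approximate root to $2$-adic precision $15$'' into ``the genuine root $a$ satisfies $a\equiv\pm 181 \bmod 2^{14}$, hence mod $128$''. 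So your write-up supplies a small lifting step that the paper leaves implicit, at the cost of a slightly longer argument; both proofs are otherwise the same.
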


\begin{proof}
Indeed, by Lemma \ref{lemma:Serremod2} the number $\sqrt{-7}$ does belong to $\mathbb{Q}_2,$ because 
$$
-7 \equiv 1 \mod 8.
$$

\noindent The equation $t^2+7=0$ in $\mathbb{Q}_2$ has two solutions $t \equiv \pm 181 \mod 128.$  Indeed,
$$
t^2+7=181^2+7=2^{15} \equiv 0 \mod 128.
$$
\end{proof}

\begin{Lemma}\label{lemma:completeQ2}
There is a valuation $|\cdot|_v$ on  $\mathbb{Q}(\sqrt{-7})$ such that the completion~$\mathbb{Q}(\sqrt{-7})_v$ of   $\mathbb{Q}(\sqrt{-7})$ with respect to this valuation is isomorphic to~$\mathbb{Q}_2.$ Moreover, one can choose an isomorphism $\theta:\mathbb{Q}(\sqrt{-7})_{v} \xrightarrow{\sim} \mathbb{Q}_2$ so that
$$
\theta(\sqrt{-7}) \equiv 181 \mod 128
$$
\noindent and 
$$
\theta\left(\frac{3}{2}(1-\sqrt{-7})\right) \equiv -14 \mod 64.
$$ 
\end{Lemma}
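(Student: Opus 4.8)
The plan is to produce the valuation $v$ together with the isomorphism $\theta$ by a careful bookkeeping of squares in $\mathbb{Q}_2$, using Lemma~\ref{lemma:sqrt-7} as the starting point. First I would recall the standard fact that in $\mathbb{Q}(\sqrt{-7})$ the rational prime $2$ splits, since $-7 \equiv 1 \pmod 8$ means $x^2 + 7$ has roots in $\mathbb{Z}_2$ (Lemma~\ref{lemma:Serremod2}); hence there is a place $v$ above $2$ with residue field $\mathbb{F}_2$ and ramification index $1$, so $\mathbb{Q}(\sqrt{-7})_v \cong \mathbb{Q}_2$. Equivalently, choosing an embedding $\mathbb{Q}(\sqrt{-7}) \hookrightarrow \mathbb{Q}_2$ amounts to choosing which of the two square roots $\pm 181$ (mod $128$, by Lemma~\ref{lemma:sqrt-7}) the element $\sqrt{-7}$ maps to; I pick the embedding sending $\sqrt{-7}$ to the root $\equiv 181 \pmod{128}$, which fixes $\theta$ on the generator and forces the first congruence.

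Next I would verify the second congruence by a direct computation in $\mathbb{Z}_2$. With $\theta(\sqrt{-7}) \equiv 181 \pmod{128}$ we get
$$
\theta\!\left(\tfrac{3}{2}(1-\sqrt{-7})\right) \equiv \tfrac{3}{2}(1 - 181) = \tfrac{3}{2}\cdot(-180) = -270 \pmod{?},
$$
and $-270 = -14 + (-256) = -14 - 2^8$, so $-270 \equiv -14 \pmod{256}$, hence certainly $\theta\!\left(\tfrac{3}{2}(1-\sqrt{-7})\right) \equiv -14 \pmod{64}$. The only subtlety is tracking how the modulus degrades under division by $2$: the congruence $\sqrt{-7} \equiv 181 \pmod{128}$ gives $1 - \sqrt{-7} \equiv -180 \pmod{128}$, and multiplying by the $2$-adic unit $3/2$ turns a mod-$128$ statement into a mod-$64$ statement because of the single factor of $2$ in the denominator; this is exactly why the lemma is stated mod $64$ rather than mod $128$.

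Finally I should address the one real choice hidden in the construction: \emph{a priori} the isomorphism $\theta$ is only determined up to composition with an automorphism of $\mathbb{Q}_2$, but $\mathbb{Q}_2$ has no nontrivial continuous automorphisms, and in fact no nontrivial automorphisms at all (its automorphism group is trivial, as $\mathbb{Q}$ is fixed and $\mathbb{Z}_2 = \overline{\mathbb{Z}}$ is characterized algebraically), so once the value of $\theta$ on $\sqrt{-7}$ is fixed among its two conjugates, $\theta$ is unique and both displayed congruences hold for it. The main obstacle is purely one of care rather than depth: making sure the $2$-adic congruences are propagated with the correct moduli through the arithmetic operations $1 - (\cdot)$ and multiplication by $3/2$, and being explicit that choosing the ``$+181$'' root (as opposed to ``$-181$'') is a legitimate and consistent choice of embedding. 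No step requires anything beyond Lemmas~\ref{lemma:Serremod2} and~\ref{lemma:sqrt-7} together with elementary $2$-adic arithmetic.
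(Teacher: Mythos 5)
Your proposal is correct and follows essentially the same route as the paper: identify a place of $\mathbb{Q}(\sqrt{-7})$ above $2$ where $2$ splits (because $-7\equiv 1 \bmod 8$, so $T^2+7$ has a root in $\mathbb{Z}_2$), so the completion is $\mathbb{Q}_2$, choose the embedding sending $\sqrt{-7}$ to the root $\equiv 181 \bmod 128$, and compute $\tfrac32(1-181)\equiv -14$, with the modulus dropping from $128$ to $64$ because of the factor $2$ in the denominator. Only a cosmetic slip: $3/2$ is not a $2$-adic unit (it has valuation $-1$), though your subsequent accounting of the lost factor of $2$ is exactly right, and the digression on $\mathrm{Aut}(\mathbb{Q}_2)$ is unnecessary since the lemma only asserts existence of a suitable $\theta$.
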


\begin{proof}
Let $|\cdot|_2$ be a valuation of a field $\mathbb{Q}$ such that for $a \in \mathbb{Z}$ we have
$$
|a|_2=2^{-\mathrm{ord}_2(a)},
$$
\noindent where $\mathrm{ord}_2(a)$ is the highest power of $2$ dividing $a.$ Let us consider the completion of the field $\mathbb{Q}(\sqrt{-7})$ with respect to the valuation extending the valuation~$|\cdot|_2$ from~$\mathbb{Q}.$ By~\cite[Chapter \rom{2}, \S 10]{Frohlich} there are at most $[\mathbb{Q}(\sqrt{-7}):\mathbb{Q}]=2$ extensions of~$|\cdot|_2$ to~$\mathbb{Q}(\sqrt{-7}),$ say $|\cdot|_{v_i}.$ If $L_i$ is a completion of $\mathbb{Q}(\sqrt{-7})$ with respect to~$|\cdot|_{v_i},$ then we have 
$$
\mathbb{Q}(\sqrt{-7}) \otimes_{\mathbb{Q}}\mathbb{Q}_2 \simeq \prod_i L_i.
$$
\noindent So we have 
\begin{equation}\label{eq:completionoffield}
\mathbb{Q}(\sqrt{-7}) \otimes_{\mathbb{Q}}\mathbb{Q}_2 \simeq \mathbb{Q}_2[T]/(T^2+7) \simeq \mathbb{Q}_2 \times \mathbb{Q}_2,
\end{equation}
\noindent because $-7$ is a square in $\mathbb{Q}_2$ by Lemma \ref{lemma:sqrt-7}.  Moreover, by Lemma \ref{lemma:sqrt-7} one of the roots of the equation $t^2+7 \equiv 0 \mod 128$ is 
$$
t_1=181=1+2^2+2^4+2^5+2^7.
$$
\noindent Therefore, we have the desired isomorphism and the following congruence relation
\begin{equation}\label{eq:3/2+}
\frac{3}{2}(1-t_1) \equiv \frac{3}{2}(1-1-2^2-2^4-2^5-2^7) \equiv -2-2^2-2^3=-14 \mod 64.
\end{equation}

\end{proof}

\begin{Remark}
From \eqref{eq:completionoffield} we get that there are two extensions of the valuation~$|\cdot|_2$ on $\mathbb{Q}$ to $\mathbb{Q}(\sqrt{-7}).$ The completions of $\mathbb{Q}(\sqrt{-7})$ with respect to these extending valuation are isomorphic to $\mathbb{Q}_2.$ If $|\cdot|_{v_1}$ and $|\cdot|_{v_2}$ are these two extensions and~\mbox{$\theta_1:\mathbb{Q}(\sqrt{-7})_{v_1} \xrightarrow{\sim} \mathbb{Q}_2$} and $\theta_2:\mathbb{Q}(\sqrt{-7})_{v_2} \xrightarrow{\sim} \mathbb{Q}_2$ are two isomorphisms, then
$$
\theta_1(\sqrt{-7})=\sqrt{-7} \equiv 181 \mod 128 \quad \text{and} \quad \theta_2(\sqrt{-7})=-\sqrt{-7} \equiv -181 \mod 128,
$$ 

\noindent as Galois group acts in this way.
\end{Remark}

\begin{Lemma}\label{lemma:solutionmodulo64}
Let
\begin{equation}\label{eq:dP64}
f(w,x,y,z)=w^2+(x^4+y^4+z^4)+14(x^2y^2+x^2z^2+y^2z^2).
\end{equation}
\noindent Assume that $w,x,y,z$ are integer numbers and at least one of them is odd. Then
\begin{equation}\label{eq:notequiv64}
f(w,x,y,z) \not\equiv 0 \mod 64.
\end{equation}

\end{Lemma}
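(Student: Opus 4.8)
The plan is to reduce the claim to a finite check modulo $64$ via a parity/descent argument on the variables. First I would observe that $w^2 \bmod 64$ depends only on $w \bmod 32$ (indeed on fewer bits for even $w$), and that each of $x^4, y^4, z^4, x^2y^2,\dots$ modulo $64$ depends only on the residues of $x,y,z$ modulo $8$; so a priori $f(w,x,y,z) \bmod 64$ is determined by $(w \bmod 32, x \bmod 8, y \bmod 8, z \bmod 8)$, a finite list. One could in principle just enumerate, but the intended argument is surely cleaner: split on the parity of $x,y,z$.

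\textbf{Case 1: at least one of $x,y,z$ is odd.} By symmetry assume $x$ is odd, so $x^2 \equiv 1 \bmod 8$ and hence $x^4 \equiv 1 \bmod 16$ (in fact $x^4\equiv 1 \bmod 16$ always for odd $x$; one can be more precise, $x^2\equiv 1 \bmod 8$ gives $x^4 \equiv 1 \bmod 16$). The key step is to analyze $f \bmod 16$ or $\bmod 32$ first to force constraints, then lift to $\bmod 64$. Writing $a = x^2, b = y^2, c = z^2$, each of $a,b,c$ is $\equiv 0,1,4 \bmod 8$ according as the variable is $\equiv 0$, odd, or $\equiv 2 \bmod 4$; and $a$ odd forces $a \equiv 1 \bmod 8$. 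Then $x^4+y^4+z^4 + 14(ab+ac+bc) \equiv a^2+b^2+c^2 + 14(ab+bc+ca) \bmod 64$, and I would compute this symmetric expression modulo $64$ over the finitely many residue patterns for $(a,b,c)$ with at least one $\equiv 1 \bmod 8$; in each subcase the result is some fixed residue $r \bmod 64$, and one checks $w^2 \not\equiv -r \bmod 64$ using Lemma~\ref{lemma:Serremod2} (a unit square is $\equiv 1 \bmod 8$, and $4 \cdot(\text{unit square}) \equiv 4 \bmod 32$, while $16 \mid w^2$ if $4 \mid w$). So the arithmetic heart is: the quadratic-residue obstruction $w^2 \bmod 64 \in \{0,1,4,9,16,17,25,33,36,41,49,57\}$ (squares mod $64$) never hits the forbidden target $-r$.

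\textbf{Case 2: $x,y,z$ all even}, so by hypothesis $w$ is odd. Then $x^4,y^4,z^4$ and all the cross terms are divisible by $16$, so $f(w,x,y,z) \equiv w^2 \equiv 1 \bmod 16$ (odd square), hence $f \not\equiv 0 \bmod 16$ and a fortiori $f \not\equiv 0 \bmod 64$. This case is immediate.

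\textbf{Main obstacle.} The only real work is the bookkeeping in Case 1: organizing the residues of $(x^2,y^2,z^2) \bmod 8$ (equivalently $\bmod 16$, since $x^4$ needs $x^2 \bmod 16$... actually $x^4 \bmod 64$ needs $x^2 \bmod$ enough, but for odd $x$ one has $x^2 \equiv 1$ or $9 \bmod 16$ and $x^4 \equiv 1 \bmod 32$, which tightens things) and verifying the square-obstruction in each of the handful of subcases. I expect the cleanest writeup fixes that, with at least one variable odd, $x^4+y^4+z^4+14(x^2y^2+y^2z^2+z^2x^2) \equiv 1 \text{ or } 4 \text{ or } \dots \pmod{64}$ in a way that is never of the form $-w^2$; the subcase split is by how many of $x,y,z$ are odd (one, two, or three) and, for the even ones, whether they are $\equiv 0$ or $\equiv 2 \bmod 4$. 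Throughout, the guiding principle is Lemma~\ref{lemma:Serremod2}: in $\mathbb{Q}_2$ a square is $2^{2k}u$ with $u \equiv 1 \bmod 8$, so modulo $64$ a square lies in a very restricted set, and this restriction is exactly what rules out $f \equiv 0$.
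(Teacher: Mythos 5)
Your Case 2 (all of $x,y,z$ even, $w$ odd) is correct and complete, and your overall strategy --- parity case analysis plus the restricted set of squares modulo $64$ --- is sound in principle. But the entire content of the lemma lies in the case you do not carry out: $w$ even and exactly two of $x,y,z$ odd (the other parity patterns with some $x,y,z$ odd force $w$ odd and die already modulo $8$). For that case you only write ``I would compute this symmetric expression modulo $64$ over the finitely many residue patterns'' and ``I expect the cleanest writeup fixes that''; no computation is actually performed, and the claim that in each subcase the quartic part is ``some fixed residue $r \bmod 64$'' is exactly what needs proof --- it is false for general parity patterns (e.g.\ with $x=y=z=1$ the quartic part is $45 \bmod 64$, with $x=3$, $y=z=1$ it is $29 \bmod 64$), and in the critical subcase the fact that it is constant ($\equiv 16 \bmod 64$, so that one needs $w^2 \equiv 48 \bmod 64$, impossible) is precisely the computation the paper does by substituting $w=2\tilde w$, $x=2\tilde x$, $y=1+2\tilde y$, $z=1+2\tilde z$ and reducing to $\tilde w^2+1+3\tilde x^2+\tilde x^4 \not\equiv 0 \bmod 4$. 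Asserting that a finite verification will succeed is not the same as verifying it, so this is a genuine gap.

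Moreover, two of the auxiliary claims you use to set up the finite check are wrong and would derail the enumeration as you describe it: $f(w,x,y,z) \bmod 64$ is \emph{not} determined by $x,y,z \bmod 8$ (already $1^4 \equiv 1$ but $9^4 \equiv 33 \bmod 64$; one needs $x,y,z$ modulo $16$, and $x^2y^2 \bmod 64$ needs even more a priori), and $x^4 \equiv 1 \bmod 32$ fails for odd $x \equiv \pm 3 \bmod 8$ (e.g.\ $3^4 = 81 \equiv 17 \bmod 32$); only $x^4 \equiv 1 \bmod 16$ holds for all odd $x$. These slips are repairable, but as written the proposed enumeration is both unexecuted and incorrectly parametrized, whereas the paper's proof closes the hard case by an explicit substitution and a final reduction modulo $4$.
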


\begin{proof}
The proof is by reductio ad absurdum. Hence,  note that  $w^2+x^4+y^4+z^4$ should be even. So we have~$3$ possibilities 
\begin{enumerate}
\item $w,x,y,z$ are odd;
\item $w$ is odd, one of $x$, $y$ and $z$ is odd and the other two are even;
\item $w$ is even, one of $x$, $y$ and $z$ is even and the other two are odd.
\end{enumerate}

\noindent In the first and in the second cases we get that $f(w,x,y,z) \equiv \pm 2 \mod 8,$ so that~$f(w,x,y,z) \not\equiv 0 \mod 64.$  In the third case without loss of generality we assume that $x$ is even and $y$ and $z$ are odd. So we write
\begin{equation}\label{eq:wxyz}
w=2\tilde{w}; \quad x=2\tilde{x}; \quad y=1+2\tilde{y}; \quad z=1+2\tilde{z}.
\end{equation}

\noindent Let us rewrite the right hand side of \eqref{eq:dP64} as 
$$
w^2+x^2(x^2+14y^2)+y^2(y^2+14z^2)+z^2(z^2+14x^2).
$$
\noindent Let us reduce it modulo $64.$ Using \eqref{eq:wxyz} we obtain
\begin{gather*}
x^2(x^2+14y^2) = 4\tilde{x}^2(4\tilde{x}^2+14+14\cdot 4\tilde{y}(\tilde{y}+1)) \equiv 16\tilde{x}^4+2^3 \cdot 7\tilde{x}^2 \mod 64;\\
y^2(y^2+14z^2) = (1+4\tilde{y}(\tilde{y}+1))(1+4\tilde{y}(\tilde{y}+1)+14+14\cdot 4\tilde{z}(\tilde{z}+1)) \equiv \\
 \equiv 15+2^3\cdot 7\tilde{z}(\tilde{z}+1) \mod 64;\\
z^2(z^2+14x^2) = (1+4\tilde{z}(\tilde{z}+1))(1+4\tilde{z}(\tilde{z}+1)+2^3\cdot 7\tilde{x}^2) \equiv\\
\equiv 1+2^3 \cdot 7\tilde{x}^2+2^3\tilde{z}(\tilde{z}+1) \mod 64.\\
\end{gather*}

\noindent From these three congruences we get that 
\begin{equation}\label{eq:final64}
\begin{gathered}
w^2+x^2(x^2+14y^2)+y^2(y^2+14z^2)+z^2(z^2+14x^2) \equiv\\
 \equiv w^2+16+2^4\cdot 7\tilde{x}^2+16\tilde{x}^4 \mod 64.
\end{gathered}
\end{equation}

\noindent So if $w \not\equiv 0 \mod 4,$ then the left hand side of \eqref{eq:final64} is not congruent to~$0$ modulo~$64.$ Thus, assume that $w \equiv 0 \mod 4,$ i.e. $w=4\tilde{w}.$ It is enough to reduce
\begin{equation}\label{eq:mod4}
\tilde{w}^2+1+3\tilde{x}^2+\tilde{x}^4  
\end{equation}
\noindent modulo $4.$ We have 
$$
3\tilde{x}^2+\tilde{x}^4 \equiv 0 \quad \text{or} \quad 1 \mod 4
$$
\noindent and
$$
\tilde{w}^2+1 \equiv  1 \quad \text{or} \quad 2 \mod 4.
$$
\noindent Therefore, the left hand side of \eqref{eq:mod4} is not congruent to~$0$ modulo~$4.$ As a result, the right hand side of \eqref{eq:dP64} is not congruent to~$0$ modulo $64,$ as was to be shown.

\end{proof}

\begin{Cor}\label{cor}
The equation 
\begin{equation}\label{eq:equationincorolary}
w^2+(x^4+y^4+z^4)+14(x^2y^2+x^2z^2+y^2z^2)=0
\end{equation}
\noindent does not have  non-zero solutions in $\mathbb{Q}_2.$
\end{Cor}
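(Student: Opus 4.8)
The plan is to deduce Corollary~\ref{cor} from Lemma~\ref{lemma:solutionmodulo64} by the standard scaling (primitivity) argument for $2$-adic solubility. Suppose, for contradiction, that \eqref{eq:equationincorolary} has a non-zero solution $(w,x,y,z) \in \mathbb{Q}_2^4$. Since the equation is weighted-homogeneous with $\deg(w)=2$ and $\deg(x)=\deg(y)=\deg(z)=1$, multiplying all coordinates by a common $2$-adic unit times a power of $2$ rescales $f$ by a unit times a power of $4$; hence we may clear denominators and assume $(w,x,y,z)\in\mathbb{Z}_2^4$. Then I would normalize so that the solution is \emph{primitive} in the weighted sense: dividing $w$ by $4$ and $x,y,z$ by $2$ simultaneously preserves integrality of $f$ and of the tuple, so after finitely many such steps we may assume that it is \emph{not} the case that $2\mid x$, $2\mid y$, $2\mid z$ and $4\mid w$.

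The key point is then to check that in every such ``reduced'' case at least one of $w,x,y,z$ is odd, so that Lemma~\ref{lemma:solutionmodulo64} applies and forces $f(w,x,y,z)\not\equiv 0\bmod 64$, contradicting $f(w,x,y,z)=0$. If one of $x,y,z$ is odd we are done immediately. Otherwise all of $x,y,z$ are even; writing $x=2x'$, $y=2y'$, $z=2z'$ we get $f(w,x,y,z)=w^2+16(\dots)$, so $16\mid w^2$, hence $4\mid w$ — but this is exactly the configuration we excluded by the reduction step. Thus the reduced solution must have an odd coordinate, and Lemma~\ref{lemma:solutionmodulo64} (reading $\mathbb{Z}_2$-coordinates modulo $64$, which is legitimate since congruence mod $64$ only sees the image in $\mathbb{Z}/64\mathbb{Z}$) gives the contradiction.

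The only mild subtlety — and the step I would be most careful about — is the descent/primitivity normalization over $\mathbb{Z}_2$ rather than over $\mathbb{Z}$: one must argue that the process of dividing $(w,x,y,z)$ by $(4,2,2,2)$ terminates. This follows because each such division drops $\min\bigl(\tfrac12\mathrm{ord}_2(w),\ \mathrm{ord}_2(x),\ \mathrm{ord}_2(y),\ \mathrm{ord}_2(z)\bigr)$ by $1$, and a non-zero $2$-adic tuple has finite valuations; equivalently, after the initial clearing of denominators the quantity $\min(\lfloor \mathrm{ord}_2(w)/2\rfloor,\mathrm{ord}_2(x),\mathrm{ord}_2(y),\mathrm{ord}_2(z))$ is a non-negative integer that strictly decreases, so the reduction halts at a tuple to which Lemma~\ref{lemma:solutionmodulo64} applies. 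This completes the proof.
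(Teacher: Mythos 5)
Your proof is correct and follows essentially the same route as the paper: the paper likewise reduces to a $\mathbb{Z}_2$ (indeed weighted-primitive) solution with at least one odd coordinate via the same $(w,x,y,z)\mapsto(w/4,x/2,y/2,z/2)$ descent, noting that $x,y,z$ all even forces $4\mid w$, and then derives the contradiction by reducing modulo $64$ and invoking Lemma~\ref{lemma:solutionmodulo64}. Your explicit termination argument for the descent and the remark about reading $\mathbb{Z}_2$-coordinates modulo $64$ are just slightly more detailed versions of steps the paper treats implicitly.
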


\begin{proof}
Assume the contrary. Then without loss of generality we can look for a solution of \eqref{eq:equationincorolary} over $\mathbb{Z}_2$ with the condition that one of $w,x,y,z$ is odd.   Indeed, if~$(2w,2x,2y,2z)$ is a solution of \eqref{eq:equationincorolary}, we get
\begin{gather*}
4w^2+16(x^4+y^4+z^4)+16\cdot14(x^2y^2+x^2z^2+y^2z^2)=\\
=4(w^2+4(x^4+y^4+z^4)+4\cdot14(x^2y^2+x^2z^2+y^2z^2))=0.
\end{gather*}
\noindent Therefore, $w^2$ is divisible by $4.$ Thus, we have $w=2\tilde{w}$ and 
$$
(2w,2x,2y,2z)=(4\tilde{w},2x,2y,2z).
$$
\noindent It is a solution if and only if $(\tilde{w},x,y,z)$ is a solution.

So let $(w,x,y,z)$ be such solution of \eqref{eq:equationincorolary}. Then 
$$
(w,x,y,z) \mod 64
$$
\noindent is a solution of \eqref{eq:equationincorolary} modulo $64.$ However, by Lemma \ref{lemma:solutionmodulo64} this equation does not have a solution modulo $64.$ This contradiction proves the corollary.

\end{proof}

\begin{Lemma}\label{lemma:centralizer}
The group $\mathrm{PSL}_2(\mathbb{F}_7)$ lies in $W(\mathrm{E}_7),$ it is a simple group and its centralizer in  $W(\mathrm{E}_7)$ is isomorphic to $\mathbb{Z}/2\mathbb{Z}.$
\end{Lemma}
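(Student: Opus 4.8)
The plan is to rely on the standard fact that the Weyl group $W(\mathrm{E}_7)$ is isomorphic to $\mathbb{Z}/2\mathbb{Z} \times \mathrm{Sp}_6(\mathbb{F}_2)$, where the first factor is generated by $-\mathrm{id}$ (the longest element $w_0$, which acts as $-1$ on the root lattice since $\mathrm{E}_7$ has $-1$ in its Weyl group). Simplicity of $\mathrm{PSL}_2(\mathbb{F}_7)$ is classical, so the content is the embedding and the centralizer computation. First I would exhibit $\mathrm{PSL}_2(\mathbb{F}_7)$ inside $\mathrm{Sp}_6(\mathbb{F}_2)$: one has $|\mathrm{PSL}_2(\mathbb{F}_7)| = 168$ and $|\mathrm{Sp}_6(\mathbb{F}_2)| = 1451520$, and $\mathrm{PSL}_2(\mathbb{F}_7) \cong \mathrm{GL}_3(\mathbb{F}_2)$ acts on $\mathbb{F}_2^3 \oplus (\mathbb{F}_2^3)^\vee \cong \mathbb{F}_2^6$ preserving the natural symplectic pairing, which gives an explicit embedding $\mathrm{PSL}_2(\mathbb{F}_7) \hookrightarrow \mathrm{Sp}_6(\mathbb{F}_2)$. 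Composing with the inclusion $\mathrm{Sp}_6(\mathbb{F}_2) \hookrightarrow W(\mathrm{E}_7)$ puts $\mathrm{PSL}_2(\mathbb{F}_7)$ in $W(\mathrm{E}_7)$. (Alternatively, this embedding is well known from the action of $W(\mathrm{E}_7)$ on the $28$ bitangents / $56$ lines, cf.~\cite{DolgClass}; I would cite this rather than rederive it.)

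For the centralizer, write $C = C_{W(\mathrm{E}_7)}(\mathrm{PSL}_2(\mathbb{F}_7))$. Since $-\mathrm{id}$ is central in $W(\mathrm{E}_7)$, it lies in $C$, so $\mathbb{Z}/2\mathbb{Z} \subseteq C$. Using the decomposition $W(\mathrm{E}_7) \cong \mathbb{Z}/2\mathbb{Z} \times \mathrm{Sp}_6(\mathbb{F}_2)$, it suffices to show $C_{\mathrm{Sp}_6(\mathbb{F}_2)}(\mathrm{PSL}_2(\mathbb{F}_7))$ is trivial. Under the identification above, $\mathrm{PSL}_2(\mathbb{F}_7) \cong \mathrm{GL}_3(\mathbb{F}_2)$ acts on $\mathbb{F}_2^6 = V \oplus V^\vee$ with $V = \mathbb{F}_2^3$, and $V$, $V^\vee$ are non-isomorphic irreducible $\mathrm{GL}_3(\mathbb{F}_2)$-modules (the standard and dual representations are inequivalent since $3$ is odd, so there is no $\mathrm{GL}_3(\mathbb{F}_2)$-invariant isomorphism $V \to V^\vee$). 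By Schur's lemma an $\mathbb{F}_2$-linear endomorphism of $V \oplus V^\vee$ commuting with the group action is a scalar on each factor, hence (over $\mathbb{F}_2$) is the identity on each factor or zero; being invertible it is the identity. Therefore $C_{\mathrm{GL}_6(\mathbb{F}_2)}(\mathrm{PSL}_2(\mathbb{F}_7))$ is already trivial, a fortiori so is its intersection with $\mathrm{Sp}_6(\mathbb{F}_2)$, and thus $C = \mathbb{Z}/2\mathbb{Z}$.

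The main obstacle is pinning down which conjugacy class of $\mathrm{PSL}_2(\mathbb{F}_7)$-subgroups in $W(\mathrm{E}_7)$ is meant and verifying that the relevant copy projects \emph{isomorphically} (not via some other action) onto a subgroup of $\mathrm{Sp}_6(\mathbb{F}_2)$ acting as $V \oplus V^\vee$; if instead one only knows abstractly that $\mathrm{PSL}_2(\mathbb{F}_7) \subset W(\mathrm{E}_7)$, one should check that its projection to $\mathrm{Sp}_6(\mathbb{F}_2)$ is faithful (it is, since $\mathrm{PSL}_2(\mathbb{F}_7)$ is simple and does not lie in the central $\mathbb{Z}/2\mathbb{Z}$) and that the resulting $6$-dimensional $\mathbb{F}_2$-representation decomposes as claimed — this can be read off character-theoretically, as $\mathrm{PSL}_2(\mathbb{F}_7)$ has exactly two inequivalent $3$-dimensional irreducible representations over $\mathbb{F}_4$, Galois-conjugate over $\mathbb{F}_2$, whose sum is the relevant $6$-dimensional $\mathbb{F}_2$-module. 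All of this is standard representation theory of $\mathrm{PSL}_2(\mathbb{F}_7)$ and I would present it concisely with references to \cite{DolgClass} for the $W(\mathrm{E}_7)$-side.
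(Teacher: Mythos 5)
Your proof is correct for the copy of $\mathrm{PSL}_2(\mathbb{F}_7)$ you construct, and the embedding part coincides with the paper's: both use $W(\mathrm{E}_7)\simeq \mathbb{Z}/2\mathbb{Z}\times\mathrm{Sp}_6(\mathbb{F}_2)$ and the block-diagonal action on $V\oplus V^{\vee}$ with $V=\mathbb{F}_2^3$. Where you genuinely diverge is the centralizer: you argue module-theoretically via Schur's lemma ($V\not\simeq V^{\vee}$ irreducible with $\mathrm{End}=\mathbb{F}_2$, so the centralizer in $\mathrm{GL}_6(\mathbb{F}_2)$ is trivial), whereas the paper argues group-theoretically: by Carter's table the order-$7$ elements of $W(\mathrm{E}_7)$ form a single class ($A_6$) of size $2^9\cdot3^4\cdot5$, so the centralizer of an order-$7$ element has order $14$, hence the centralizer of $\mathrm{PSL}_2(\mathbb{F}_7)$ sits inside $\mathbb{Z}/2\mathbb{Z}\times\mathbb{Z}/7\mathbb{Z}$ and simplicity (trivial center) kills the $\mathbb{Z}/7\mathbb{Z}$ part. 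Your route is more self-contained representation theory; the paper's route has the advantage of being blind to the module structure, so it applies verbatim to \emph{any} subgroup of $W(\mathrm{E}_7)$ isomorphic to $\mathrm{PSL}_2(\mathbb{F}_7)$ --- which is exactly what is needed later, since the copy occurring in the proof of part (iii) is the geometric image of $\mathrm{Aut}(S)$, not a priori the block-diagonal one.

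You correctly flag this as the main obstacle, but your proposed resolution contains a factual error: the two $3$-dimensional irreducible mod-$2$ representations of $\mathrm{PSL}_2(\mathbb{F}_7)\simeq\mathrm{PSL}_3(\mathbb{F}_2)$ are \emph{not} $\mathbb{F}_4$-representations that are Galois-conjugate over $\mathbb{F}_2$; they are both realizable over $\mathbb{F}_2$ (the natural module and its dual) and are interchanged by the duality (inverse-transpose) outer automorphism, not by Frobenius. Likewise, ``$3$ is odd'' only rules out an alternating invariant form, which in characteristic $2$ is not yet a proof that $V\not\simeq V^{\vee}$; a clean argument is that an order-$7$ element has eigenvalues $\{\zeta,\zeta^2,\zeta^4\}$ on $V$ and $\{\zeta^3,\zeta^5,\zeta^6\}$ on $V^{\vee}$. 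The gap for an arbitrary copy is easy to close along your lines: the symplectic form makes the $6$-dimensional module self-dual, and the mod-$2$ irreducibles of $\mathrm{PSL}_2(\mathbb{F}_7)$ have dimensions $1,3,3,8$, so faithfulness plus self-duality forces composition factors $V$ and $V^{\vee}$, each with multiplicity one; then $\mathrm{Hom}(V^{\vee},V)=\mathrm{Hom}(V,V^{\vee})=0$ gives $\mathrm{End}=\mathbb{F}_2$ whether or not the extension splits, and the centralizer in $\mathrm{Sp}_6(\mathbb{F}_2)$ is trivial as you claim. With that repair (or by simply adopting the paper's order-$7$ centralizer count), your argument is complete.
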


\begin{proof}
First of all, note that 
\begin{equation}\label{eq:simeqweyl}
W(\mathrm{E}_7) \simeq \mathbb{Z}/2\mathbb{Z} \times \mathrm{Sp}_6(\mathbb{F}_2)
\end{equation}
\noindent by~\cite[\S 3.12.4]{Wil}. We have the natural inclusion $\mathrm{PSL}_3(\mathbb{F}_2) \subset \mathrm{Sp}_6(\mathbb{F}_2),$ which maps the matrix $M \in \mathrm{PSL}_3(\mathbb{F}_2)$ to the matrix
$$
\begin{pmatrix}
M & 0\\
0 & (M^{-1})^{\mathrm{T}}\\
\end{pmatrix}
\in \mathrm{Sp}_6(\mathbb{F}_2).
$$
\noindent As there is an isomorphism $\mathrm{PSL}_2(\mathbb{F}_7) \simeq \mathrm{PSL}_3(\mathbb{F}_2)$ by~\cite[\S 3.12.1]{Wil} we obtain the inclusion 
$$
\mathrm{PSL}_2(\mathbb{F}_7) \subset W(\mathrm{E}_7)
$$
\noindent by \eqref{eq:simeqweyl}. The group $\mathrm{PSL}_2(\mathbb{F}_7)$ is a simple group by~\cite[\S 3.3.1]{Wil}.

Let us consider a subgroup in $W(\mathrm{E}_7)$ isomorphic to $\mathbb{Z}/7\mathbb{Z}$ such that it  also lies in the subgroup of Weyl group $\mathrm{PSL}_2(\mathbb{F}_7).$ By~\cite[Table 10]{Carter} the element of order~$7$ corresponds to the conjugacy class $A_6$ in $W(\mathrm{E}_7)$ and the cardinality of the set of this conjugacy class is equal to $2^9 \cdot 3^4 \cdot 5.$ As 
$$
|W(\mathrm{E}_7)|=2^{10}\cdot 3^4 \cdot 5 \cdot 7,
$$
\noindent we get that the stabilizer of the element of order $7$ is of order $14.$ This means that the centralizer of $\mathbb{Z}/7\mathbb{Z}$ in $W(\mathrm{E}_7)$ is isomorphic to $\mathbb{Z}/14\mathbb{Z}.$ This also means that the centralizer of $\mathbb{Z}/7\mathbb{Z} \subset \mathrm{PSL}_2(\mathbb{F}_7)$ lies in $\mathbb{Z}/2\mathbb{Z} \times\mathrm{PSL}_2(\mathbb{F}_7).$ But $\mathrm{PSL}_2(\mathbb{F}_7)$ is simple group. Therefore, its centralizer in the Weyl group $W(\mathrm{E}_7)$ is isomorphic to $\mathbb{Z}/2\mathbb{Z}.$

\end{proof}

\section{Proof of Theorem~\ref{th:nopoints}}

In this section we prove Theorem \ref{th:nopoints}.

\begin{proof}[Proof of Theorem \ref{th:nopoints}]
Let us prove \ref{conditiondp2}. The smoothness of the surface $S$ can be proved by direct computation of partial derivatives. By~\cite[Proposition 6.3.6]{Dolgachev} the surface $S$ is a del Pezzo surface of degree $2.$

Let us prove \ref{conditionAut}. It is well-known that the anticanonical linear system of $S$ gives a double cover $\phi_{|-K_S|}:S \to \mathbb{P}^2$ which is ramified over a quartic curve $C$. In our case the equation of the quartic $C$ is 
\begin{equation}\label{eq:quarticcurve}
(x^4+y^4+z^4)-\frac{3}{2}(1-\sqrt{-7})(x^2y^2+x^2z^2+y^2z^2)=0
\end{equation}
\noindent in $\mathbb{P}^2$ with coordinates $[x:y:z].$ This follows that 
\begin{equation}\label{eq:suseteqaut}
\mathrm{PSL}_2(\mathbb{F}_7) \times \mathbb{Z}/2\mathbb{Z} \subseteq \mathrm{Aut}(S),
\end{equation}
\noindent where $\mathbb{Z}/2\mathbb{Z}$ is a Galois group of the cover~$\phi_{|-K_S|}$ generated by  the Geiser involution~$\gamma$ which acts on $S$ as 
$$
\gamma:(w,x,y,z) \mapsto (-w,x,y,z),
$$
\noindent and $\mathrm{PSL}_2(\mathbb{F}_7)$ is an automorphism group of $C$ by~\cite[Chapter 1]{Elkies}.  By definition, the Geiser involution~\mbox{$\gamma \in \mathrm{Aut}(S)$} and $\mathrm{PSL}_2(\mathbb{F}_7) \subset \mathrm{Aut}(S)$ commute with each other. So we get 
\begin{equation}\label{eq:suseteqaut}
\mathrm{PSL}_2(\mathbb{F}_7) \times \mathbb{Z}/2\mathbb{Z} \subseteq \mathrm{Aut}(S).
\end{equation}
\noindent The double cover $\phi_{|-K_S|}$ gives us the following exact sequence
$$
0 \to \mathbb{Z}/2\mathbb{Z} \to \mathrm{Aut}(S) \to \mathrm{Aut}(\mathbb{P}^2)
$$
from which we obtain
\begin{equation}\label{eq:aut}
\mathrm{Aut}(S)/\left(\mathbb{Z}/2\mathbb{Z}\right) \simeq \mathrm{Aut}(C) \simeq \mathrm{PSL}_2(\mathbb{F}_7).
\end{equation}
\noindent Therefore, from \eqref{eq:suseteqaut} and \eqref{eq:aut} we get the desired assertion.

Let us prove \ref{conditionPic}. To prove that $\mathrm{Pic}(S) \simeq \mathbb{Z}$ one should prove that  
\begin{equation}\label{eq:picgaloisinvariant}
\mathrm{Pic}(\bar{S})^{\mathrm{Gal}(\bar{\mathbb{Q}}/\mathbb{Q}(\sqrt{-7}))} \simeq \mathbb{Z},
\end{equation}
\noindent where $\bar{S}=S \times_{\mathrm{Spec}(\mathbb{Q}(\sqrt{-7}))}\mathrm{Spec}(\bar{\mathbb{Q}}),$ because we have the obvious inclusion 
$$
\mathrm{Pic}(S) \subseteq \mathrm{Pic}(\bar{S})^{\mathrm{Gal}(\bar{\mathbb{Q}}/\mathbb{Q}(\sqrt{-7}))}.
$$
\noindent To prove \eqref{eq:picgaloisinvariant} note that the image of the Galois group $\mathrm{Gal}(\bar{\mathbb{Q}}/\mathbb{Q}(\sqrt{-7}))$ in the Weyl group $W(\mathrm{E}_7)$ and the automorphism group $\mathrm{Aut}(S)$ which lies in $W(\mathrm{E}_7)$ by~\cite[Corollary 8.2.40]{Dolgachev} are commute with each other. So we need to consider the centralizer of~\mbox{$\mathrm{Aut}(S) \simeq \mathrm{PSL}_2(\mathbb{F}_7) \times \mathbb{Z}/2\mathbb{Z}$} in $W(\mathrm{E}_7).$ By Lemma \ref{lemma:centralizer} it is isomorphic to~$\mathbb{Z}/2\mathbb{Z}$ and it is obvious that it is generated by the Geiser involution. This means that the image of the Galois group $\mathrm{Gal}(\bar{\mathbb{Q}}/\mathbb{Q}(\sqrt{-7}))$ in the Weyl group $W(\mathrm{E}_7)$ is either trivial, or isomorphic to $\mathbb{Z}/2\mathbb{Z} \simeq \langle \gamma \rangle.$ 

To prove that it is not trivial it is enough to show that that there is an exceptional curve on $S$ which is not defined over $\mathbb{Q}(\sqrt{-7}).$ According to the basic theory of smooth del Pezzo surfaces of degree $2$ (see, for instance,~\cite[\S 8.7.1]{Dolgachev}) the surface $S$ has $56$ lines which are the inverse image of 28 bitangents of the branch curve $C.$ It is not hard to see that the line in $\mathbb{P}^2$ defined by the equation
$$
x+y+z=0
$$
is a bitangent of $C.$ So the inverse image of this bitangent are two lines which are defined by the equations
$$
w \pm i\left(\frac{1-\sqrt{-7}}{2} \right)^2(x^2+xy+y^2)=0.
$$
\noindent They are not defined over $\mathbb{Q}(\sqrt{-7}),$ hence, the image of  $\mathrm{Gal}(\bar{\mathbb{Q}}/\mathbb{Q}(\sqrt{-7}))$ in the Weyl group $W(\mathrm{E}_7)$ is isomorphic to $\mathbb{Z}/2\mathbb{Z} \simeq \langle \gamma \rangle.$ As it is generated by the Geiser involution which interchanges two lines in the inverse image of bitangent for every bitangent of $C$, we get
$$ 
\mathrm{Pic}(\bar{S})^{\mathrm{Gal}(\bar{\mathbb{Q}}/\mathbb{Q}(\sqrt{-7}))} \simeq \mathbb{Z}.
$$

Let us prove \ref{conditionnopoint}. To prove that $S$ does not have a rational point over the field~$\mathbb{Q}(\sqrt{-7})$ it is enough to prove that  it does not have a rational point over some completion of $\mathbb{Q}(\sqrt{-7}).$ Let us consider the valuation $|\cdot|_2$ on $\mathbb{Q}.$ And let $|\cdot|_v$ be one of the extensions of $|\cdot|_2$ on $\mathbb{Q}(\sqrt{-7}).$ Then by Lemma \ref{lemma:completeQ2} for the completion~$\mathbb{Q}(\sqrt{-7})_v$ of $\mathbb{Q}(\sqrt{-7})$ with respect to $|\cdot|_v$ we have an isomorphism
$$
\theta:\mathbb{Q}(\sqrt{-7})_v  \xrightarrow{\sim} \mathbb{Q}_2.
$$ 
This isomorphism satisfies
$$
\theta\left(\frac{3}{2}(1-\sqrt{-7})\right) \equiv -14 \mod 64.
$$ 

\noindent So to prove that  $S$ does not have a rational point over the field $\mathbb{Q}_2$ it is enough to prove that
$$
w^2+(x^4+y^4+z^4)+14(x^2y^2+x^2z^2+y^2z^2)=0
$$
\noindent does not have a solution over $\mathbb{Q}_2.$   However, by Corollary \ref{cor} it does not have a non-zero solution. Quod erat demonstrandum.

\end{proof}

%
%
%

\providecommand{\bysame}{\leavevmode\hbox to3em{\hrulefill}\thinspace}
\providecommand{\MR}{\relax\ifhmode\unskip\space\fi MR }
\providecommand{\MRhref}[2]{%
  \href{http://www.ams.org/mathscinet-getitem?mr=#1}{#2}
}
\providecommand{\href}[2]{#2}

\end{document}